\documentclass[reqno,12pt]{amsart}
\usepackage{amsmath,amsthm,amssymb,amsfonts,amscd,bm}
\usepackage{epsfig}
\usepackage{xypic}
\usepackage{color}
\setlength{\topmargin}{0cm}
\setlength{\headheight}{1cm}
\setlength{\oddsidemargin}{0cm}
\setlength{\evensidemargin}{0cm}
\setlength{\textheight}{23cm}
\setlength{\textwidth}{16cm}
\setlength{\parindent}{1cm}

\raggedbottom

%
%

%
\numberwithin{equation}{section}


\theoremstyle{plain}
\newtheorem{theorem}{Theorem}

\newtheorem{corollary}[theorem]{Corollary}

\newtheorem*{theorem*}{Theorem}
\newtheorem*{conjecture*}{Conjecture}

\theoremstyle{definition}
\newtheorem{remark}[theorem]{Remark}
\newtheorem{example}[theorem]{Example}
\newtheorem*{definition}{Definition}






\newcommand{\CC}{{\mathbb{C}}}

\newcommand{\QQ}{{\mathbb{Q}}}

\newcommand{\ZZ}{{\mathbb{Z}}}

\newcommand{\eps}{\varepsilon}

\def \mf#1#2#3#4{
\xymatrix{
{#1}\  \ar@<0.4ex>[r]^{{#2}} & \ {#4}
\ar@<0.4ex>[l]^{{#3}}
}
}

\begin{document}
\title[Matrix factorizations]{Graded matrix factorizations of size two and reduction}
\author{Wolfgang Ebeling and Atsushi Takahashi}
\address{Institut f\"ur Algebraische Geometrie, Leibniz Universit\"at Hannover, Postfach 6009, D-30060 Hannover, Germany}
\email{ebeling@math.uni-hannover.de}
\address{
Department of Mathematics, Graduate School of Science, Osaka University, 
Toyonaka Osaka, 560-0043, Japan}
\email{takahashi@math.sci.osaka-u.ac.jp}
\subjclass[2010]{32S05, 13C14, 18G80}
\date{}
\begin{abstract} 
We associate a complete intersection singularity to a graded matrix factorization of size two of a polynomial in three variables. We show that we get an inverse to the reduction of singularities considered by C.~T.~C.~Wall. We study this for the full strongly exceptional collections in the triangulated category of graded matrix factorizations constructed by H.~Kajiura, K.~Saito, and the second author.
\end{abstract}
\maketitle

\section{Introduction}
In his classification of singularities \cite{WPLMS}, C.~T.~C.~Wall noticed that there is a relation between series of singularities given by a reduction. More precisely, he considers a linear reduction $L$ of a complete intersection singularity in $\CC^4$ to a hypersurface singularity in $\CC^3$ and a reduction $\Delta$ of a hypersurface singularity in $\CC^3$ of corank 3 to one of corank 2.  The aim of this paper is to show that as an inverse of such a reduction one can consider a graded matrix factorization of size two.

We consider weighted homogeneous deformations of an invertible polynomial. We use the approach of \cite{AT} to define graded matrix factorizations for such deformations. We classify weighted homogeneous deformations of invertible polynomials of three variables and we define Dolgachev numbers for them. We consider a graded matrix factorization of size two of such a deformation and associate a complete intersection singularity to it. We show that we get the original singularity back by $L$- or $\Delta$-reduction. Moreover we consider graded matrix factorizations of size two of invertible polynomials in two variables and show that they are related to a reduction between hypersurface singularities in $\CC^2$ considered in \cite{EW}.

In \cite{KST2}, H.~Kajiura, K.~Saito, and the second author constructed a full strongly exceptional collection in the triangulated category of graded matrix factorizations of a polynomial associated to a regular system of weights  whose smallest exponents are equal to $-1$. These polynomials define the 14 exceptional unimodal hypersurface singularities, the 6 heads of the bimodal series of hypersurface singularities, and 2 further singularities. They are  weighted homogeneous deformations of invertible polynomials of three variables. The exceptional collection is described by a certain quiver. The graded matrix factorizations of size two listed in the paper correspond to the outermost vertices of certain arms of the quiver. We show that they correspond to singularities where the corresponding length of the arm (which is one of the Dolgachev numbers) is increased by one. We thus obtain the picture of Fig.~\ref{FigPyramidR}.

There is Arnold's strange duality between the 14 exceptional unimodal hypersurface singularities and the extension \cite{EW} of it to the bimodal series and the complete intersection singularities which reduce to the 14 exceptional unimodal singularities and the 6 heads of the bimodal series. This can be considered as a special kind of mirror symmetry. As a corollary of our main result we get that the reduction is mirror dual to a special adjacency between these singularities.

\begin{sloppypar}

{\bf Acknowledgements}.\  
This work has been partially supported by DFG.
The second named author is also supported 
by JSPS KAKENHI Grant Numbers JP16H06337 and JP21H04994. 
\end{sloppypar}

\section{Graded matrix factorizations for invertible polynomials}
A polynomial $f(x_1, \ldots, x_n) \in \CC[x_1, \ldots , x_n]$ is called {\em weighted homogeneous}, if there are positive integers $w_1, \ldots, w_n$ and $d$ such that $f(\lambda^{w_1}x_1, \ldots , \lambda^{w_n} x_n)=\lambda^d f(x_1, \ldots , x_n)$ for $\lambda \in \CC^*$. We call $(w_1, \ldots, w_n;d)$ a {\em system of weights}. If ${\rm gcd}(w_1, \ldots, w_n,d)=1$, then the system of weights is called {\em reduced}.

\begin{definition}
A weighted homogeneous polynomial $f(x_1, \ldots, x_n)$ is called {\em invertible} if the following conditions are satisfied
\begin{itemize}
\item[(i)] it can be written
\[ f(x_1, \ldots , x_n)=\sum_{i=1}^n a_i \prod_{j=1}^n x_j^{E_{ij}},
\]
where $a_i \in \CC^\ast$, $E_{ij}$ are non-negative integers, and the $n \times n$-matrix $E:=(E_{ij})$ is invertible over $\QQ$,
\item[(ii)] it has an isolated singularity at the origin.
\end{itemize}
\end{definition}

Let $f(x_1,\dots ,x_n)=\sum_{i=1}^na_i\prod_{j=1}^nx_j^{E_{ij}}$ be an invertible polynomial. Without loss of generality, we assume that $\det E>0$. The {\em canonical system of weights} is the system of weights $(w_1, \ldots, w_n;d)$  given by the unique solution of the equation
\begin{equation*}
E
\begin{pmatrix}
w_1\\
\vdots\\
w_n
\end{pmatrix}
={\rm det}(E)
\begin{pmatrix}
1\\
\vdots\\
1
\end{pmatrix}
,\quad 
d:={\rm det}(E).
\end{equation*}
The canonical system of weights $(w_1, \ldots, w_n;d)$ is in general non-reduced, define
\[
c_f := {\rm gcd}(w_1, \ldots, w_n,d).
\]

\begin{definition}
A {\em weighted homogeneous deformation of an invertible polynomial} is a polynomial of the form
\[ f(x_1, \ldots , x_n)= \sum_{i=1}^N a_i \prod_{j=1}^n x_j^{E_{ij}}, 
\]
where $N \geq n$, $a_i \in \CC^\ast$, $E_{ij}$ are non-negative integers, $\sum_{i=1}^n a_i \prod_{j=1}^n x_j^{E_{ij}}$ is an invertible polynomial with canonical weight system $(w_1, \ldots , w_n;d)$, and $\prod_{j=1}^n x_j^{E_{ij}}$ are monomials of the same degree $d$ for $1 \leq i \leq N$.
\end{definition}

Note that the case $N=n$ is permitted, i.e., the weighted homogeneous deformations of an invertible polynomial include the invertible polynomial itself.

\begin{definition}
Let $f(x_1,\dots ,x_n)=\sum_{i=1}^N a_i\prod_{j=1}^nx_j^{E_{ij}}$ be a weighted homogeneous deformation of an invertible polynomial. Consider the free abelian group $\oplus_{i=1}^n\ZZ\vec{x_i}\oplus \ZZ\vec{f}$ 
generated by the symbols $\vec{x_i}$ for the variables $x_i$ for $i=1,\dots, n$
and the symbol $\vec{f}$ for the polynomial $f$.
The {\em maximal grading} $L_f$ of the invertible polynomial $f$ 
is the abelian group defined by the quotient 
\[
L_f:=\bigoplus_{i=1}^n\ZZ\vec{x_i}\oplus \ZZ\vec{f}\left/I_f\right.,
\]
where $I_f$ is the subgroup generated by the elements 
\[
\vec{f}-\sum_{j=1}^nE_{ij}\vec{x_j},\quad i=1,\dots ,N.
\]
\end{definition}

Note that $L_f$ is an abelian group of rank 1 which is not necessarily free. We have the degree map ${\rm deg}: L_f \to \ZZ$ defined by $\vec{x}_i \mapsto w_i/c_f$, $\vec{f} \mapsto d/c_f$, where $(w_1, \ldots , w_n; d)$ is the canonical system of weights of $f$. This map is an isomorphism if and only if the canonical system of weights is reduced, see \cite[Proposition~17]{ET}.
 
Let $f(x_1, \ldots, x_n)$ be a weighted homogeneous deformation of an invertible polynomial.
We recall the definition of an $L_f$-graded matrix factorization of $f$, see \cite{AT} and the references therein.

Let $S=\CC[x_1, \ldots , x_n]$. This is naturally an $L_f$-graded algebra:
\[
S= \bigoplus_{\vec{l} \in L_f} S_{\vec{l}}.
\]
For any $L_f$-graded $S$-module $M$, let $M(\vec{l})$ denote the $L_f$-graded $S$-module, where the grading is shifted by $\vec{l} \in L_f$:
\[ 
M(\vec{l})= \bigoplus_{\vec{l}' \in L_f}  M(\vec{l})_{\vec{l}'}, \quad M(\vec{l})_{\vec{l}'}:=M_{\vec{l}+\vec{l}'}.
\]
The grading shift by $\vec{l} \in L_f$ induces a functor $(\vec{l})$ on the category of finitely generated $L_f$-graded $S$-modules.

\begin{definition} An {\em $L_f$-graded matrix factorization} of $f$ is
\[
\overline{F}:=\Big(\mf{F_0}{f_0}{f_1}{F_1}\Big),
\]
where $F_0$ and $F_1$ are $L_f$-graded free $S$-modules of finite rank and $f_0: F_0 \to F_1$, $f_1: F_1 \to F_0(\vec{f})$ are morphisms such that $f_1 \circ f_0= f \cdot {\rm id}_{F_0}$ and $f_0(\vec{f}) \circ f_1= f \cdot {\rm id}_{F_1}$. The rank of $F_1$ is equal to the rank of $F_0$ and it is called the {\em size} or the {\em rank} of the matrix factorization $\overline{F}$. 
\end{definition}

Let $f(x_1, \ldots, x_n)$  be a weighted homogeneous deformation of an invertible polynomial with reduced weight system $(w_1, \ldots , w_n;d)$.  We define 
\[ 
\eps_f:= w_1 + \cdots + w_n -d.
\]

Let $f(x,y,z)$ be a weighted homogeneous deformation of an invertible polynomial in three variables.
We shall associate Dolgachev numbers to $f$. This is done as follows. Let $R_f:=\CC[x,y,z]/(f)$. Let $r \geq 3$, $A=(\alpha_1, \ldots , \alpha_r)$ and let $\bm{\lambda}=(\lambda_3, \ldots , \lambda_r)$ be an $(r-2)$-tuple of complex numbers with $\lambda_3=1$, $\lambda_i\neq \lambda_j$ for $i \neq j$, $i,j=3, \ldots, r$. Let $R_{A,\bm{\lambda}}$ be the ring associated to the weighted projective line corresponding to the tuple of numbers $A=(\alpha_1, \ldots , \alpha_r)$ \cite{GL}, namely the factor algebra 
\[R_{A,\bm{\lambda}}:=\CC[X_1, \ldots , X_r]/I_{\bm{\lambda}}
\]
where $I_{\bm{\lambda}}$ is the ideal generated by the polynomials
\[ 
X_i^{\alpha_i} - X_2^{\alpha_2} + \lambda_i X_1^{\alpha_1},  \quad i=3, \ldots , r.
\]

We shall consider a classification of weighted homogeneous deformations of invertible polynomials according to the classification of invertible polynomials in three variables in \cite{ET}. In each type except V, two of the three monomials are replaced by a product of $m$ terms involving parameters $(\lambda_3, \ldots , \lambda_{m+2})$. In the case of type V, we set $m=1$.

\begin{definition} The {\em Dolgachev numbers} of a weighted homogeneous deformation $f$ of an invertible polynomial is the $(m+2)$-tuple ($m \geq 1$) of numbers $A=(\alpha_1,\alpha_2, \underbrace{\alpha_3, \ldots, \alpha_3}_m)$  defined by the embedding $R_f \hookrightarrow R_{A,\bm{\lambda}}$ given by Table~\ref{TabGenerator}.
\end{definition}
\begin{table}[h]
\begin{center}
\begin{tabular}{|c|c|c|c|c|}
\hline
Type & $f(x,y,z)$ & $x$ & $y$ & $z$ \\
\hline
I & $-\prod_{i=3}^{m+2}(y^{\frac{p_2}{m}}-\lambda_i x^{\frac{p_1}{m}}) +z^{p_3}$ & $X_1$ & $X_2$ & $X_3 \cdots X_{m+2}$  \\
${\rm II}_1$ & $-x\prod_{i=3}^{m+2}(y^{\frac{p_2}{m}}-\lambda_i x^{\frac{p_1-1}{m}}) +z^{p_3}$ & $X_1^{p_3}$ & $X_2$ & $X_1X_3 \cdots X_{m+2}$  \\
${\rm II}_2$ & $-\prod_{i=3}^{m+2}(z^{\frac{p_3}{m}}-\lambda_i x^{\frac{p_1}{m}}) +xy^{p_2}$ & $X_1^{p_3}$ & $X_3 \cdots X_{m+2}$ & $X_1X_2$ \\
III & $-xy\prod_{i=3}^{m+2}(y^{\frac{p_2-1}{m}}-\lambda_i x^{\frac{p_1-1}{m}}) +z^{p_3}$ & $X_1^{p_3}$ & $X_2^{p_3}$ & $X_1X_2X_3 \cdots X_{m+2}$  \\
IV & $-x\prod_{i=3}^{m+2}(y^{\frac{p_2}{m}}-\lambda_i x^{\frac{p_1-1}{m}}) +yz^{p_3}$ & $X_1^{p_3}X_2$ & $X_2^{p_1}$ & $X_1X_3 \cdots X_{m+2}$  \\
V & $x^{q_1}y+y^{q_2}z-z^{q_3}x$ & $X_2X_3^{q_2}$ & $X_3X_1^{q_3}$ & $X_1X_2^{q_3}$ \\
\hline
\end{tabular}
\begin{tabular}{|c|c|c|c|}
\hline
Type & $\alpha_1$ &  $\alpha_2$ &  $\alpha_3$  \\
\hline
I &  $\frac{p_1}{m}$ & $\frac{p_2}{m}$ & $p_3$ \\
${\rm II}_1$ &  $\frac{p_3}{m}(p_1-1)$ & $\frac{p_2}{m}$ & $p_3$ \\
${\rm II}_2$  & $\frac{p_3}{m}(p_1-1)$ & $\frac{p_3}{m}$ & $p_2$ \\
III  & $\frac{p_3}{m}(p_1-1)$ &  $\frac{p_3}{m}(p_2-1)$ & $p_3$ \\
IV & $\frac{p_3}{m}(p_1-1)$ & $\frac{p_1p_2-p_1+1}{m}$ & $p_3$ \\
V & $q_2q_3-q_3+1$ & $q_3q_1-q_1+1$ & $q_1q_2-q_2+1$ \\
\hline
\end{tabular}
\end{center}
\caption{The image of the generators in $R_{A,\bm{\lambda}}$}\label{TabGenerator}
\end{table}
It follows from the embedding $R_f \hookrightarrow R_{A,\bm{\lambda}}$ into the ring of a weighted projective line that $L_f \cong \ZZ$.

\section{Graded matrix factorizations of size two} \label{sect:mf2}
We shall now consider $L_f$-graded matrix factorizations of size two for the following two cases
\begin{itemize}
\item[(A)] $f$ is a weighted homogeneous deformation of an invertible polynomial in three variables,
\item[(B)] $f$ is an invertible polynomial in two variables.
\end{itemize}

(A)  Let $S=\CC[x,y,z]$ and let $f(x,y,z)$ be a weighted homogeneous deformation of an invertible polynomial of the form 
\[
f(x,y,z)=p_1(x,y,z)h_1(x,y,z)+p_2(x,y,z)h_2(x,y,z),
\]
where  $p_i \in S_{\vec{p}_i}$ for some $\vec{p}_i \in L_f$, $i=1,2$, and $p_1,p_2$ forms a regular sequence in $S$.
As in \cite[Section~2]{BGS} (cf.\ \cite[Definition~2.11]{AT}), we consider the Koszul resolution of the $L_f$-graded $S$-module $M=S/(p_1,p_2)$
\[
0 \longrightarrow S(-\vec{p}_1-\vec{p}_2) \longrightarrow S(-\vec{p}_1) \oplus S(-\vec{p}_2) \longrightarrow S \longrightarrow S/(p_1,p_2) \longrightarrow 0.
\]
This yields the $L_f$-graded matrix factorization $Q:=\Big(\mf{F_0}{f_0}{f_1}{F_1}\Big)$ of $f$ such that
\[
F_0=S(\vec{0}) \oplus S(\vec{f}-\vec{p}_1-\vec{p}_2), \quad F_1:=S(\vec{f}-\vec{p}_1) \oplus S(\vec{f}-\vec{p}_2),
\]
and $f_0$ and $f_1$ are given by the matrices 
\[ q_0 = \begin{pmatrix} h_1(x,y,z) & -p_2(x,y,z)\\  h_2(x,y,z) & p_1(x,y,z) \end{pmatrix} \mbox{ and }  q_1= \begin{pmatrix} p_1(x,y,z) & p_2(x,y,z) \\  -h_2(x,y,z) & h_1(x,y,z) \end{pmatrix}
\]
respectively. 

We associate to the matrix factorization a complete intersection singularity in $\CC^4$. Denote the coordinates of $\CC^4$ by $w,x,y,z$. 
\begin{definition}
The complete intersection singularity $(X_Q,0)$ is the singularity defined by 
\begin{eqnarray*} \mathbf{F}_Q(w,x,y,z) & = & (F_{Q,1}(w,x,y,z),F_{Q,2}(w,x,y,z))\\
& := &(-h_1(x,y,z)+wp_2(x,y,z), h_2(x,y,z)+wp_1(x,y,z)).
\end{eqnarray*}
\end{definition}

We can get the function $f$ back from $\mathbf{F}_Q$ by the linear reduction considered, in the case $p_1(x,y,z)=y$ and $p_2(x,y,z)=z$, by Wall \cite[7.9]{WPLMS}. Namely, let $L_w\mathbf{F}_Q$ be the elimination of the variable $w$ from the functions $F_{Q,1},F_{Q,2}$. This is given by the resultant $R_w(F_{Q,1},F_{Q,2})$ of the functions with respect to the variable $w$. We have
\[ L_w\mathbf{F}_Q=R_w(F_{Q,1},F_{Q,2}) = \det q_1 = f .
\]

 Now suppose that $p_1(x,y,z)=y$, $p_2(x,y,z)=z$, $h_2(x,y,z)=z$ and $h_1(x,y,z) = 2a(x,y)z +b(x,y)$. Then
\[
(F_{Q,1}(w,x,y,z),F_{Q,2}(w,x,y,z))=(-h_1(x,y,z)+wz, z+wy)
\]
and the substitution $z=-wy$ in $F_{Q,1}$ gives
\[ F_Q(w,x,y):=F_{Q,1}(w,x,y,-wy)=-w^2y-h_1(x,y,-wy)=-w^2y+2wya(x,y)-	b(x,y) .
\]
Thus $(X_Q,0)$ is in this case a hypersurface singularity defined by $F_Q=0$.
The discriminant of the polynomial $F_Q(w,x,y)$ in the variable $w$ is
\[ \Delta_wF_Q(w,x,y)= y^2a(x,y)^2 -yb(x,y).
\]
This is the reduction $\Delta_wF_Q$ of the polynomial $F_Q$ considered by Wall in \cite[7.3]{WPLMS}.
The polynomial $f$ is equal to
\[ f(x,y,z)=z^2 + 2ya(x,y)z +yb(x,y).
\]
Under the substitution $\widetilde{z}:=z+ya(x,y)$ this becomes
\[ f(x,y,\widetilde{z})=\widetilde{z}^2-y^2a(x,y)^2+yb(x,y) = \widetilde{z}^2 - \Delta_wF_Q(w,x,y).
\]

(B)  Now let $S=\CC[x,y]$ and let $f(x,y)$ be an invertible polynomial  of the form $f(x,y)=x^{\alpha}y^{\beta}+y^3$, where $\alpha \geq 5$. We consider the Koszul resolution of the $L_f$-graded $S$-module $M=S/(x^2,y^2)$
\[
0 \longrightarrow S(-2\vec{x}-2\vec{y}) \longrightarrow S(-2\vec{x}) \oplus S(-2\vec{y}) \longrightarrow S \longrightarrow S/(x^2,y^2) \longrightarrow 0.
\]
This yields the $L_f$-graded matrix factorization $Q:=\Big(\mf{F_0}{f_0}{f_1}{F_1}\Big)$ of $f$ such that
\[
F_0=S(\vec{0}) \oplus S(\vec{f}-2\vec{x}-2\vec{y}), \quad F_1:=S(\vec{f}-2\vec{x}) \oplus S(\vec{f}-2\vec{y}),
\]
and $f_0$ and $f_1$ are given by the matrices 
\[ q_0 = \begin{pmatrix} x^{\alpha-2}y^{\beta} & -y^2\\  y & x^2 \end{pmatrix} \mbox{ and }  q_1= \begin{pmatrix} x^2 & y^2 \\  -y& x^{\alpha-2}y^{\beta} \end{pmatrix}
\]
respectively. 

\begin{definition} We define a singularity $(X_Q,0)$ by
\[
\mathbf{F}_Q(w,x,y):= (F_{Q,1}(w,x,y),F_{Q,2}(w,x,y))= \left( x^{\alpha-2}y^{\beta} +\frac{w}{x}y^2 , y-\frac{w}{x} x^2 \right).
\]
If we substitute $y=\frac{w}{x} x^2$ in $F_{Q,1}$, we obtain the function 
\[
F_Q(w,x): = F_{Q,1}\left(w,x,\frac{w}{x} x^2 \right) = w^3x+x^{\alpha+\beta-2}w^{\beta}.
\]
Thus $(X_Q,0)$ is in fact a hypersurface singularity defined by $F_Q=0$.
\end{definition}
We get the function $f$ back from $F_Q$ by the reduction considered in  \cite[p.~19]{EW}, namely $F_Q$ is of the form
$F_Q(w,x)=w^3x+x^3A\left(w,x\right)$, where $A(w,x)=x^{\alpha+\beta-5}w^{\beta}$, and $RF_Q(w,x)= w^3+x^5A\left(\frac{w}{x},x\right)=f(w,x)$.

\section{Graded matrix factorizations of size two for regular systems of weights with $\eps=-1$}
We shall now consider the graded matrix factorizations of size two for the regular systems of weights with $\eps=-1$. They were determined in \cite{KST2}. There are 22 such systems of weights. They define the Fuchsian hypersurface singularities of genus 0 (see, e.g., \cite{ERIMS}). Let $W=(a,b,c;h)$ be such a weight system. By definition, it is reduced. The number $\eps_W$ is defined by $\eps_W := a+b+c-h$. Let $A_W=(\alpha_1, \ldots , \alpha_r)$ be the signature of the system of weights $W$. Then $(\alpha_1, \ldots , \alpha_r)$ corresponds to the signature of the corresponding Fuchsian singularity. For $r=3$, there are 14 such weight systems which correspond to the 14 exceptional unimodal singularities. Here the polynomial $f_W$ is an invertible polynomial and its canonical system of weights coincides with $(a,b,c;h)$. For $r=4$, there are 6 corresponding to the heads of the bimodal series. There are 2 more with $r=5$. The corresponding equations are weighted homogeneous deformations $f_W$ of an invertible polynomial, see Table~\ref{TabDefInv}. 
\begin{table}[h]
\begin{center}
\begin{tabular}{|c|c|c|c|c|}
\hline
$W$ & $f_W$ & Type & $p_1,p_2,p_3$ & $A=A_W$ \\
\hline
6,14,21;42 & $-(y^3-x^7)+z^2$ & I, $m=1$ & 3,7,2 & 3,7,2 \\
4,10,15;30 & $-x(y^5-x^2)+z^2$ & ${\rm II}_1, m=1$ & 3,5,2 & 4,5,2 \\
3,8,12;24 & $-x(y^4-x)+z^3$ & ${\rm II}_1, m=1$ & 2,4,3 & 3,4,3 \\
6,8,15;30 & $-x(y^3-x^4)+z^2$ & ${\rm II}_1, m=1$ & 5,3,2 & 8,3,2 \\
4,6,11;22 & $-xy(y^2-x^3)+z^2$ & III, $m=1$ & 4,3,2 & 6,4,2 \\
3,5,9;18 & $-x(y^3-x)+yz^3$ & IV, $m=1$ & 2,3,3 & 3,5,3 \\
4,5,10;20 & $-x(y^2-x)+z^5$ & ${\rm II}_1, m=1$ & 2,2,5 & 5,2,5 \\
3,4,8;16 & $-x(y^2-x)+yz^4$ & IV, $m=1$ & 2,2,4 & 4,3,4 \\
6,8,9;24 & $-x(y^2-x^3)+z^3$ & ${\rm II}_1, m=1$ & 4,2,3 & 9,2,3 \\
4,6,7;18 & $-x(y^3-x^2)+yz^2$ & IV, $m=1$ & 3,3,2 & 4,7,2 \\
3,5,6;15 & $-xy(y-x^2)+z^3$ & III, $m=1$ & 3,2,3 & 6,3,3 \\
4,5,6;16 & $-x(y^2-x^3)+yz^2$ & IV, $m=1$ & 4,2,2 & 6,5,2 \\
3,4,5;13 & $-z(xz-y^2)+yx^3$ & V & 3,2,2 & 3,4,5 \\
3,4,4;12 & $-xy(y-x)+z^4$ & III, $m=1$ & 2,2,4 & 4,4,4\\
2,6,9;18 & $-x(y^3-x)(y^3-\lambda_4 x)+z^2$&  ${\rm II}_1$, $m=2$ & 3,6,2 & 2,3,2,2 \\
2,4,7;14 & $-xy(y-x^2)(y-\lambda_4 x^2)+z^2$ & III, $m=2$ & 5,3,2 & 4,2,2,2 \\
2,4,5;12 & $-x(y^2-x)(y^2-\lambda_4 x) +yz^2$ & IV, $m=2$ & 3,4,2 & 2,5,2,2 \\
2,3,6;12 & $\begin{array}{c} -(z^3-x)(z^3-\lambda_4 x)+xy^2 \\ -(z^2-x)(z^2-\lambda_4 x)+xy^3 \end{array} $ & ${\rm II}_2, m=2$& $\begin{array}{c} 2,2,6\\ 2,3,4 \end{array}$ & $\begin{array}{c} 3,3,2,2 \\ 2,2,3,3 \end{array}$\\
2,3,4;10 & $-x(y-x^2)(y- \lambda_4 x^2)+yz^2$ & IV, $m=2$ & 5,2,2 & 4,3,2,2 \\
2,3,3;9  & $-x(y-x)(y-\lambda_4 x)+yz^3$ & IV, $m=2$ & 3,2,3 & 3,2,3,3 \\
2,2,5;10 & $-xy(y-x)(y-\lambda_4 x)(y-\lambda_5 x) +z^2$ & III, $m=3$ & 4,4,2 & 2,2,2,2,2 \\
2,2,3;8 & $-x(y-x)(y-\lambda_4 x)(y-\lambda_5 x) +yz^2$ & IV, $m=3$ & 4,3,2 & 2,3,2,2,2 \\
\hline
\end{tabular}
\end{center}
\caption{Type of $f_W$}\label{TabDefInv}
\end{table}
This again implies that $L_{f_W} \cong \ZZ$.
Thus all the regular systems of weights with $\eps_W=-1$ can be given by weighted homogeneous deformations $f$ of invertible polynomials with $\eps_f=-1$ and $L_f \cong \ZZ$ and the $L_f$-graded matrix factorizations of $f$ correspond to the graded matrix factorizations considered in \cite{KST2}. The Dolgachev numbers are equal to the signature of the system of weights.

It will turn out that the graded matrix factorizations of size 2 correspond to some of these singularities, but also to the Fuchsian complete intersection singularities of genus 0 (see \cite{ERIMS}). There are 8+5+2 of them for certain signatures $(\alpha_1, \ldots , \alpha_r)$. 

Let $W$ be a regular system of weights with $\eps_W=-1$ and signature $A_W=(\alpha_1, \ldots , \alpha_r)$. In \cite{KST2}, a full strongly exceptional collection in the triangulated category of graded matrix factorizations associated to $W$ was constructed.  The full strongly exceptional collection is described by the graph shown in Fig.~\ref{FigT+pqr}.
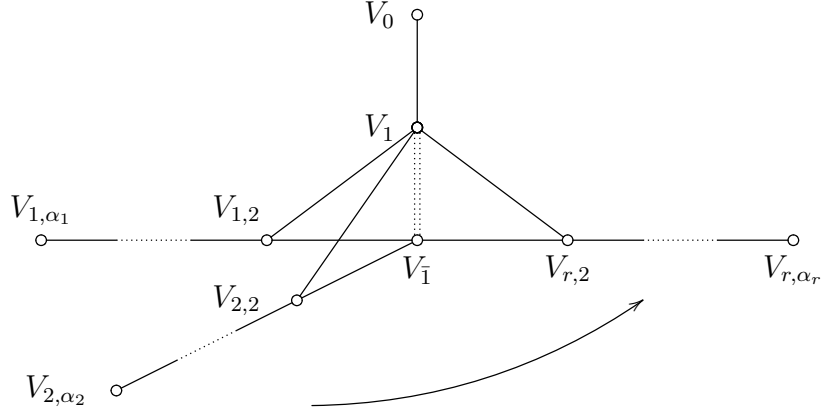
\begin{figure}
\[
\begin{xy}
(-5, 30)*{V_0}, (-5,15)*{V_1},
(0,-4)*{V_{\bar{1}}}, (20,-4)*{V_{r,2}},  (50,-4)*{V_{r,\alpha_r}},
(-24,4)*{V_{1,2}},  (-50,4)*{V_{1,\alpha_1}},
(-24,-8)*{V_{2,2}}, (-48,-20)*{V_{2,\alpha_2}},
\ar@{-} (0,30)*\cir<2pt>{}; (0,15)*\cir<2pt>{}="A-1"
\ar@{:} "A-1"; (0,0)*\cir<2pt>{}="A"
\ar@{-}  "A" ;  (20,0) *\cir<2pt>{}="B1"
\ar@{-}  "B1"; (30,0) \ar@{.} (30,0); (40,0)
\ar@{-}  (40,0);  (50,0)  *\cir<2pt>{}
\ar@{-}  "A";  (-20,0)  *\cir<2pt>{}="B2"
\ar@{-}  "B2"; (-30,0) \ar@{.} (-30,0); (-40,0)
\ar@{-}  (-40,0);  (-50,0)  *\cir<2pt>{}
\ar@{-}  "A";  (-16,-8)  *\cir<2pt>{}="B3"
\ar@{-}  "B3"; (-24,-12) \ar@{.} (-24,-12); (-32,-16)
\ar@{-}  (-32,-16);  (-40,-20)  *\cir<2pt>{}
\ar@/_10pt/ (-14,-22) ; (30,-8)
\ar@{-} (0,15)*\cir<2pt>{}; "B1"
\ar@{-} (0,15)*\cir<2pt>{}; "B2"
\ar@{-} (0,15)*\cir<2pt>{}; "B3"
\end{xy}
\]
\caption{The graph $T^+_{\alpha_1,\ldots ,\alpha_r}$} \label{FigT+pqr}
\end{figure}

It turns out that a graded matrix factorization of size 2 corresponds to the end point $V_{i,\alpha_i}$ of some arm.
\begin{theorem} \label{thm:main}
Let $f$ be a weighted homogeneous deformation of an invertible polynomial  with $\eps_f=-1$ and $L_f \cong \ZZ$. Let $A=(\alpha_1, \ldots , \alpha_r)$ be the Dolgachev numbers of $f$. If the $L_f$-graded matrix factorization $V_{i,\alpha_i}$ is of rank 2, then the corresponding singularity $\mathbf{F}_{V_{i,\alpha_i}}$ has the signature $(\widetilde{\alpha}_1, \ldots , \widetilde{\alpha}_r)$ with $\widetilde{\alpha}_i=\alpha_i+1$ and $\widetilde{\alpha}_j=\alpha_j$ for $j \neq i$.
\end{theorem}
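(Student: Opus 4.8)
The plan is to verify the statement type by type along the classification of weighted homogeneous deformations underlying Table~\ref{TabGenerator} (types I, II$_1$, II$_2$, III, IV, together with the exceptional type V equation $-z(xz-y^2)+yx^3$), using the reduction mechanism of Section~\ref{sect:mf2}. Concretely, for each $f$ as in the statement (a finite list, classified as in Table~\ref{TabDefInv}) and each arm $i$ of the graph $T^+_{\alpha_1,\ldots,\alpha_r}$ of Fig.~\ref{FigT+pqr} along which $V_{i,\alpha_i}$ has rank $2$, one has to (a) write $V_{i,\alpha_i}$ explicitly, (b) identify the complete intersection singularity $\mathbf{F}_{V_{i,\alpha_i}}$, and (c) compute its signature.

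For (a): since $L_f\cong\ZZ$, the graded matrix factorizations of \cite{KST2} are described by $\ZZ$-graded free modules and explicit matrices, and one checks directly which outermost vertices $V_{i,\alpha_i}$ have rank $2$. In those cases I would present $V_{i,\alpha_i}$ as the matrix factorization $Q$ of Section~\ref{sect:mf2}(A) associated to a decomposition $f=p_1h_1+p_2h_2$ with $(p_1,p_2)$ a regular sequence, reading $(p_1,p_2,h_1,h_2)$ off the factored normal form of $f$: e.g.\ for the $z^{p_3}$-types one takes $p_2=z$, $h_2=z^{p_3-1}$, and $p_1$ (one of) the factor(s) of the complementary part, with the grading shift (equivalently, the degrees $\vec p_1,\vec p_2\in L_f$ produced by the Koszul resolution) arranged so that $\deg w=\vec f-\vec p_1-\vec p_2$ has positive weight and $V_{i,\alpha_i}$ is the endpoint of the arm carrying the corresponding exponent; the $yz^{p_3}$- and $xy$-types, and type V, are treated in the same way.

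For (b) and (c): from $\mathbf{F}_Q=(h_1-wp_2,\ h_2+wp_1)$ one has $\deg w=\vec f-\vec p_1-\vec p_2$, $\deg F_{Q,1}=\vec f-\vec p_1$, $\deg F_{Q,2}=\vec f-\vec p_2$, so $\mathbf{F}_Q$ is weighted homogeneous, and summing the four weights and subtracting the two degrees collapses to $w_1+w_2+w_3-d=\eps_f=-1$ (here $(w_1,w_2,w_3;d)$ is the reduced weight system of $f$, and degrees are read as integers via $L_f\cong\ZZ$). After a case-by-case check that $(p_1,p_2)$ being a regular sequence together with $f$ having an isolated singularity forces $\mathbf{F}_Q$ to define an isolated complete intersection singularity, $(X_Q,0)$ lies in the list of Fuchsian complete intersection singularities of genus $0$ of \cite{ERIMS}, so its signature is determined by its weight system. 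To compute it, recall from Section~\ref{sect:mf2} that $L_w\mathbf{F}_Q=\det q_1=f$, so $(X_Q,0)$ reduces to $f$; in the cases where a further substitution makes $\mathbf{F}_Q$ a hypersurface singularity (as in the worked examples of Section~\ref{sect:mf2}) one recognizes the resulting weighted homogeneous surface singularity and reads off its signature via Table~\ref{TabGenerator}, while in the genuinely complete-intersection cases one instead exhibits an embedding $\CC[w,x,y,z]/(F_{Q,1},F_{Q,2})\hookrightarrow R_{\widetilde A,\bm{\lambda}}$ into the coordinate ring of the weighted projective line of signature $\widetilde A=(\alpha_1,\ldots,\alpha_i+1,\ldots,\alpha_r)$, giving the images of $w,x,y,z$ as monomials in the $X_j$, exactly parallel to Table~\ref{TabGenerator}. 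Comparing $\widetilde A$ with the Dolgachev numbers of $f$ then yields the claim.

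The main obstacle is step (a): extracting from \cite{KST2} the precise matrices of the outermost vertices, determining which of them have rank $2$, and recognizing those as the Koszul matrix factorizations of the displayed decompositions with the correct grading shift. Once $(p_1,p_2,h_1,h_2)$ and $\vec p_1,\vec p_2$ are pinned down, steps (b) and (c) are finite, essentially monomial, computations; type V and the deformed ($m\ge 2$) cases need the most care, since there $\mathbf{F}_Q$ does not reduce to a hypersurface singularity and the weighted projective line embedding must be used directly.
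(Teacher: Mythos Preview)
Your plan is essentially the paper's own proof: a type-by-type enumeration (I, II$_1$, II$_2$, III, IV, V) of the rank-$2$ Koszul matrix factorizations coming from decompositions $f=p_1h_1+p_2h_2$, followed by an explicit embedding of the local ring of $\mathbf{F}_Q$ into $R_{\widetilde A,\bm\lambda}$ by sending $w,x,y,z$ to monomials in the $X_j$. The only organizational difference is that the paper treats all cases uniformly via such embeddings (it does not split off the hypersurface cases) and, instead of checking each entry of Table~\ref{TabDefInv} separately, writes down for every type a single diophantine condition on $(p_1,p_2,p_3,m)$ for the embedding to exist and then observes that the solutions are exactly the $\eps_f=-1$ list (plus two strays); this extra bookkeeping is what yields the converse statement recorded in the Remark after the theorem, which your direct case check would not produce.
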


\begin{proof} We consider the matrix factorizations for the cases of Table~\ref{TabGenerator} and examine conditions under which there exist maps of the local ring of the corresponding complete intersection singularity to the ring of a weighted projective line of type $(\widetilde{\alpha}_1, \ldots , \widetilde{\alpha}_r)$. We indicate the matrix factorization, the mapping, the index $i$ where we have an increase of $\alpha_i$ by one, and the condition under which the mapping gives an embedding. 

(I) $m \geq 2$, $m|p_1$, $m|p_2$. We consider the matrix factorization
\[ 
q_0= \begin{pmatrix} \prod_{i=4}^{m+2}(y^{\frac{p_2}{m}}-\lambda_i x^{\frac{p_1}{m}}) & -z \\  z^{p_3-1} &  -(y^{\frac{p_2}{m}}-x^{\frac{p_1}{m}}) \end{pmatrix}.
\]
Mapping:
\[
x=X_1X_3^{\frac{p_2}{m}}, \ y=X_2X_3^{\frac{p_1}{m}}, \ z=X_3^{(m-1)\frac{p_1p_2}{m^2}}X_4 \cdots X_{m+2}, \ w=X_4^{p_3-1} \cdots X_{m+2}^{p_3-1}
\]
Increase of $\alpha_3$ under the condition:
\begin{equation}
(m-1)\frac{p_1p_2}{m^2}(p_3-1)=\frac{p_1p_2}{m^2}+p_3+1. \label{eq:I}
\end{equation}

(${\rm II}_1$) $m \geq 1$, $m|(p_1-1)$, $m|p_2$. We have two essentially different matrix factorizations:

(a)
\[ 
q_0= \begin{pmatrix} \prod_{i=3}^{m+2}(y^{\frac{p_2}{m}}-\lambda_i x^{\frac{p_1-1}{m}}) & -z \\  z^{p_3-1} &  -x \end{pmatrix}.
\]
Mapping:
\[
x=X_1^{p_3+ \frac{m+p_2}{p_1-1}}, \ y=X_1X_2, \ z=X_1^{p_2}X_3 \cdots X_{m+2}, \ w=X_3^{p_3-1} \cdots X_{m+2}^{p_3-1}
\] 
Increase of $\alpha_1$ under the condition:
\begin{equation}
p_2(p_3-1)(p_1-1)=p_3(p_1-1)+m+p_2.
\end{equation}

(b) 
\[ 
q_0= \begin{pmatrix} x\prod_{i=4}^{m+2}(y^{\frac{p_2}{m}}-\lambda_i x^{\frac{p_1-1}{m}}) & -z \\  z^{p_3-1} &  -(y^{\frac{p_2}{m}}-x^{\frac{p_1-1}{m}}) \end{pmatrix}.
\]
Mapping:
\begin{eqnarray*}
x=X_1^{p_3}X_3^{\frac{p_2}{m}}, & &  y=X_2X_3^{\frac{p_1-1}{m}},  \quad z=X_1X_3^{\frac{p_2}{m}+(m-1)\frac{(p_1-1)p_2}{m^2}}X_4 \cdots X_{m+2}, \\
& &  w=X_1^{p_3-1}X_4^{p_3-1} \cdots X_{m+2}^{p_3-1}
\end{eqnarray*}
Increase of $\alpha_3$ under the condition:
\begin{equation}
\left( \frac{p_2}{m} + (m-1)\frac{(p_1-1)p_2}{m^2} \right) (p_3-1)=\frac{(p_1-1)p_2}{m^2}+p_3+1.
\end{equation}

(${\rm II}_2$) $m \geq 2$, $m|p_1$, $m|p_3$. We consider the matrix factorization
\[ 
q_0= \begin{pmatrix} \prod_{i=4}^{m+2}(z^{\frac{p_3}{m}}-\lambda_i x^{\frac{p_1}{m}}) & -y \\  xy^{p_2-1} &  -(z^{\frac{p_3}{m}}-x^{\frac{p_1}{m}}) \end{pmatrix}.
\]
Mapping:
\begin{eqnarray*} 
x=X_1^{p_3}X_3^{\frac{p_3}{m}}, & &  y=X_3^{(m-1)\frac{p_1p_3}{m^2}}X_4 \cdots X_{m+2}, \quad  z=X_1X_2X_3^{\frac{p_1}{m}}, \\
& & w=X_1^{(m-1)\frac{p_3}{m}}X_4^{p_2-1} \cdots X_{m+2}^{p_2-1}
\end{eqnarray*}
Increase of $\alpha_3$ under the condition:
\begin{equation}
\frac{p_3}{m}+(m-1)\frac{p_1p_3}{m^2}(p_2-1)=(m-1)\frac{p_1p_3}{m^2}+p_2+1.
\end{equation}

One can show that the other possible matrix factorization
\[
q_0= \begin{pmatrix} \prod_{i=4}^{m+2}(z^{\frac{p_3}{m}}-\lambda_i x^{\frac{p_1}{m}}) & -x \\  y^{p_2} &  -(z^{\frac{p_3}{m}}-x^{\frac{p_1}{m}}) \end{pmatrix}
\]
has no solution.

(III) $m \geq 1$, $m|(p_1-1)$, $m|(p_2-1)$. We have the essentially different matrix factorizations:

(a) 
\[ 
q_0= \begin{pmatrix} y\prod_{i=3}^{m+2}(y^{\frac{p_2-1}{m}}-\lambda_i x^{\frac{p_1-1}{m}}) & -z \\  z^{p_3-1} &  -x \end{pmatrix}.
\]
Mapping:
\[
x=X_1^{p_3+ \frac{(m-1)+p_2}{p_1-1}}, \ y=X_1X_2^{p_3}, \ z=X_1^{p_2}X_2X_3\cdots X_{m+2}, \ w=X_2^{p_3-1}X_3^{p_3-1} \cdots X_{m+2}^{p_3-1}
\] 
Increase of $\alpha_1$ under the condition:
\begin{equation}
p_2(p_3-1)(p_1-1)=p_3(p_1-1)+(m-1)+p_2.
\end{equation}

(b)
\[ 
q_0= \begin{pmatrix} xy\prod_{i=4}^{m+2}(y^{\frac{p_2-1}{m}}-\lambda_i x^{\frac{p_1-1}{m}}) & -z \\  z^{p_3-1} &  -(y^{\frac{p_2-1}{m}}-x^{\frac{p_1-1}{m}}) \end{pmatrix}.
\]
Mapping:
\begin{eqnarray*}
x=X_1^{p_3}X_3^{\frac{p_2-1}{m}}, & &  y=X_2^{p_3}X_3^{\frac{p_1-1}{m}},  \quad z=X_1X_2X_3^{\frac{p_1-1}{m}+\frac{p_2-1}{m}+(m-1)\frac{(p_1-1)(p_2-1)}{m^2}}X_4 \cdots X_{m+2}, \\
& &  w=X_1^{p_3-1}X_2^{p_3-1}X_4^{p_3-1} \cdots X_{m+2}^{p_3-1}
\end{eqnarray*}
Increase of $\alpha_3$ under the condition:
\begin{equation}
\left( \frac{p_1-1}{m}+\frac{p_2-1}{m} + (m-1)\frac{(p_1-1)(p_2-1)}{m^2} \right) (p_3-1)=\frac{(p_1-1)(p_2-1)}{m^2}+p_3+1.
\end{equation}

(IV) $m \geq 1$, $m|(p_1-1)$, $m|p_2$. We have three essentially different factorizations.

(a)
\[ 
q_0= \begin{pmatrix} \prod_{i=3}^{m+2}(y^{\frac{p_2}{m}}-\lambda_i x^{\frac{p_1-1}{m}}) & -z \\  yz^{p_3-1} &  -x \end{pmatrix}.
\]
Mapping:
\[
x=X_1^{p_3+ \frac{m+p_2}{p_1-1}}X_2, \ y=X_1X_2^{p_1}, \ z=X_1^{p_2}X_3 \cdots X_{m+2}, \ w=X_2^{p_1-1}X_3^{p_3-1} \cdots X_{m+2}^{p_3-1}
\] 
Increase of $\alpha_1$ under the condition:
\begin{equation}
p_2(p_3-1)(p_1-1)=p_3(p_1-1)+(m+1-p_1)+p_2. \label{eq:IVa}
\end{equation}

(b)
\[ 
q_0= \begin{pmatrix} \prod_{i=3}^{m+2}(y^{\frac{p_2}{m}}-\lambda_i x^{\frac{p_1-1}{m}}) & -y \\  z^{p_3} &  -x \end{pmatrix}.
\]
Mapping:
\[
x=X_1^{p_3}X_2^{p_3}, \  y=X_2^{p_1+\frac{(p_1-1)(p_3-1)+m}{p_2}},  \ z=X_1X_2X_3 \cdots X_{m+2}, \ w=X_3^{p_3} \cdots X_{m+2}^{p_3}
\]
Increase of $\alpha_2$ under the condition:
\begin{equation}
p_2(p_1(p_3-1)-p_3)=(p_1-1)(p_3-1)+m. \label{eq:IVb}
\end{equation}
This equation is equivalent to Equation~(\ref{eq:IVa}).

(c)
\[ 
q_0= \begin{pmatrix} x\prod_{i=4}^{m+2}(y^{\frac{p_2}{m}}-\lambda_i x^{\frac{p_1-1}{m}}) & -z \\  yz^{p_3-1} &  -(y^{\frac{p_2}{m}}-x^{\frac{p_1-1}{m}}) \end{pmatrix}.
\]
Mapping:
\begin{eqnarray*}
x=X_1^{p_3}X_2X_3^{\frac{p_2}{m}}, & &  y=X_2^{p_1}X_3^{\frac{p_1-1}{m}},  \quad z=X_1X_3^{\frac{p_2}{m}+(m-1)\frac{(p_1-1)p_2}{m^2}}X_4 \cdots X_{m+2}, \\
& &  w=X_1^{p_3-1}X_2^{1+(m-1)\frac{p_1-1}{m}}X_4^{p_3-1} \cdots X_{m+2}^{p_3-1}
\end{eqnarray*}
Increase of $\alpha_3$ under the condition:
\begin{equation}
\left( \frac{p_2}{m} + (m-1)\frac{(p_1-1)p_2}{m^2} \right) (p_3-1)+\frac{p_1-1}{m}=\frac{(p_1-1)p_2}{m^2}+p_3+1. \label{eq:IVc}
\end{equation}

Similarly as in $({\rm II}_2)$, one can show that the other matrix factorization
\[ 
q_0= \begin{pmatrix} x\prod_{i=4}^{m+2}(y^{\frac{p_2}{m}}-\lambda_i x^{\frac{p_1-1}{m}}) & -y \\  z^{p_3} &  -(y^{\frac{p_2}{m}}-x^{\frac{p_1-1}{m}}) \end{pmatrix}.
\]
has no solution.

(d) We also have to consider a special matrix factorization in the case $m=1$. We can write the equation as
\[
f(x,y,z)=-y(xy^{p_2-1}-z^{p_3})+x^{p_1}.
\]
Then we have the matrix factorization 
\[ 
q_0= \begin{pmatrix} xy^{p_2-1}-z^{p_3} & -x \\  x^{p_1-1} &  -y \end{pmatrix}
\]
Mapping:
\[ 
x=X_2^{p_3}X_3^{p_3}, \quad y=x_2^{p_1+\frac{2}{p_2-1}}, \quad z=X_1X_2X_3, \quad w=X_3^{p_3(p_1-1)}
\]
Increase of $\alpha_2$ under the condition:
\begin{equation}
p_3(p_1-1)(p_2-1)=p_1(p_2-1)+2.
\end{equation}
This equation is equivalent to Equation~(\ref{eq:IVb}) with $m=1$.

(V) The equation is
\[
f(x,y,z)=-z(xz^{p_3-1}-y^{p_2})+yx^{p_1}.
\]
Then we have two matrix factorizations:

(a)
\[
q_0= \begin{pmatrix} xz^{p_3-1}-y^{p_2} & -x \\  yx^{p_1-1} &  -z \end{pmatrix}
\]
Mapping:
\[ 
x=X_2^{p_2}X_3^{p_3}, \quad y=X_1^{p_3}X_2X_3, \quad z=X_1X_2^{p_1p_2-p_2+1}, \quad w=X_1^{p_3-1}X_3^{p_1p_2-p_2+1}
\]
Increase of $\alpha_2$ under the condition:
\begin{equation}
p_2(p_1-1)(p_3-1)=(p_1-1)(p_3-1)+2. \label{eq:Va}
\end{equation}

(b)
\[
q_0= \begin{pmatrix} xz^{p_3-1}-y^{p_2} & -y\\  x^{p_1} &  -z \end{pmatrix}
\]
Mapping:
\[
x=X_1X_2X_3^{p_2}, \quad y=X_1^{p_1p_3-p_1+1}X_3, \quad z=X_1^{p_1}X_2^{p_1}, \quad w=X_2^{p_1}X_3^{p_1p_2}
\]
Increase of $\alpha_1$ under the condition~(\ref{eq:Va}).

In the cases I, ${\rm II}_1$(b), ${\rm II}_2$, III(b), and IV(c), one can exchange the factor with $\lambda_3(=1)$ in the lower right corner of the matrix factorization by any of the factors corresponding to $\lambda_i$, $i=4, \ldots, m+2$, and interchange $X_3$ and $X_i$. This leads to an increase of $\alpha_i$ instead of $\alpha_3$. In Case III(a), one can interchange the variables $x$ and $y$ to get an increase of $\alpha_2$.

The solutions to the equations~(\ref{eq:I})--(\ref{eq:Va}) are precisely the entries of Table~\ref{TabDefInv} and two extra ones. Equation~(\ref{eq:I}) has the solution I, $m=2$, $(p_1,p_2,p_3)=(4,4,3)$, $A=(2,2,3,3)$. This corresponds to the minimally elliptic singularity $V'_{(1)}$ of \cite{EPLMS} with weight system $W=(3,3,4;12)$ and $\eps_W=-2$. Equation~(\ref{eq:IVc}) has the additional solution IV, $m=1$, $(p_1,p_2,p_3)=(3,3,4)$, $A=(8,7,4)$. This corresponds to a singularity with the weight system $W=(12,8,7;36)$ with $\eps_W=-9$.
\end{proof}

\begin{remark} The proof shows that the statement of Theorem~\ref{thm:main} only holds in this very special situation. Namely, it shows that, except for two exceptional cases, the weighted homogeneous deformations $f$ of an invertible polynomial with $\eps_f=-1$ are the only weighted homogeneous deformations of an invertible polynomial with the property that there exists an $L_f$-graded matrix factorization $Q$ of size 2 such that the corresponding singularity $\mathbf{F}_Q$ has the signature $(\widetilde{\alpha}_1, \ldots , \widetilde{\alpha}_r)$ with $\widetilde{\alpha}_i=\alpha_i+1$ and $\widetilde{\alpha}_j=\alpha_j$ for $j \neq i$ where $A=(\alpha_1, \ldots , \alpha_r)$ are the Dolgachev numbers of $f$. 
\end{remark}

\begin{remark} \label{rem:R}
The reduction is a relation between the singularities. Not all possible extensions of the signatures are realised. For example, the following pairs are missing: $(2,3,7) \not\leftarrow (2,3,8)$, $(2,4,5) \not\leftarrow (2,4,6)$, $(3,3,4) \not\leftarrow (3,3,5)$, $(2,2,2,3) \not\leftarrow (2,2,2,4)$. In these cases, the graded matrix factorization $V_{r,\alpha_r}$ for the singularities with signature $(2,3,7)$, $(2,4,5)$, $(3,3,4)$, and $(2,2,2,3)$ is of rank 4. In these cases, the function $f_W$ can be given by
$f_W(x,y,z)=f'_W(x,y)+z^2$, where $f'_W(x,y)$ is  the invertible polynomial in two variables given by
\[
(2,3,7): x^7+y^3 \quad (2,4,5):x^5y+y^3 \quad (3,3,4):x^8+y^3 \quad(2,2,2,3):x^6y+y^3.
\]
We shall consider the $L_f$-graded matrix factorization of size 2 of $f'_W$ and the corresponding reduction $R$ defined in Section~\ref{sect:mf2}(B).
We enhance the reduction by these cases.
\end{remark}

\begin{example} We consider the case $W=(4,10,15;30)$, $A_W=(2,4,5)$, $f_W=x^5y+y^3+z^2$, $f'_W=x^5y+y^3$. This is the singularity $E_{13}$ in Arnold's notation. We consider the graded matrix factorization of $f'_W$ given by
\[
q_0=\begin{pmatrix} x^3y & -y^2  \\  y &x^2  \end{pmatrix}.
\]
Then $\mathbf{F}_Q(w,x)=w^3x+wx^4$. This is the singularity $Z_{12}$ with signature $(2,4,6)$.
\end{example}

The isolated hypersurface singularities defined by regular systems of weights with $\eps=-1$ and $r \leq 4$ and the isolated complete intersection singularities which reduce to them are Kodaira singularities \cite{EW} or minimally elliptic singularities \cite{Laufer77}. The minimal resolution is of Kodaira type II, III, IV or ${\rm I}_0^\ast$ (see \cite{EW}) and it carries a fundamental cycle $Z$ (see \cite[p.~132]{Artin}). The self-intersection number of the fundamental cycle $D:=-Z^2$ is called the {\em grade} of the singularity \cite{EW}. Let $(X,0)$ be any of the singularities of Table~\ref{TabDefInv} with Dolgachev numbers $\alpha_1, \ldots, \alpha_r$ and $r \leq 4$ or of the isolated complete intersection singularities which reduce to them.  According to \cite{EW} or \cite{Laufer77},  there is the correspondence between certain types of Dolgachev numbers,  Kodaira types, the number $\ell$ of parameters $k_i$ of the Kodaira type,  and the grades indicated in Table~\ref{TabKodaira} (see also \cite[p.~515]{Saito}).
\begin{table}[h]
\begin{center}
\begin{tabular}{|c|c|c|c|}
\hline
Type of Dolgachev numbers & Kodaira type & $\ell$ & $D=-Z^2$\\
\hline
$2,3,c$; $c \geq 7$ & ${\rm II}(k)$ & 1  & $c-6$ \\
$2,b,c$; $b,c \geq 4$ & ${\rm III}(k_1,k_2)$ & 2 &  $b+c-8$\\
$a,b,c$; $a,b,c \geq 3$ & ${\rm IV}(k_1,k_2,k_3)$ & 3 & $a+b+c-9$ \\
$a,b,c,d$; $a,b,c,d \geq 2$ &  ${\rm I}_0^\ast(k_1,k_2,k_3,k_4)$ & 4 & $a+b+c+d-8$\\
\hline
\end{tabular}
\end{center}
\caption{Dolgachev numbers, Kodaira types, and grades} \label{TabKodaira}
\end{table}
By \cite[Corollary~7.9.2]{WPLMS}, the Kodaira type and hence the number $\ell$  of a singularity and its reduction are the same.

The possible reductions between these singularities are visualized in Fig.~\ref{FigPyramidR}. Here a usual arrow refers to a reduction corresponding to Theorem~\ref{thm:main}, a dashed arrow to the special reduction $R$ described in Section~\ref{sect:mf2}(B). The Dolgachev numbers of the regular systems of weights for $\eps=-1$ are denoted by usual letters, those of the remaining singularities are denoted by scriptstyle letters. We also indicate the names of Arnold for the hypersurface singularities \cite{AGV85} and of Wall \cite{WPLMS} (see also \cite{ERIMS}) for the complete intersection singularities. 
\begin{figure}[h]
$$
\xymatrix{  
& & & {237}  \ar@{<--}[r] \ar@{}^{E_{12}}[d]  & {238} \ar@{<-}[r] \ar@{}^{Z_{11}}[d]  & {239} \ar@{<-}[r] \ar@{}^{Q_{10}}[d] &  {\scriptstyle{2310}} \ar@{}^{J'_{9}}[d] \\
& & & {245}   \ar@{<--}[r]  \ar@{}^{E_{13}}[d]  & {246} \ar@{<-}[r]  \ar@{}^{Z_{12}}[d] & {247} \ar@{<-}[r]  \ar@{}^{Q_{11}}[d] &  {\scriptstyle{248}}  \ar@{}^{J'_{10}}[d]  \\
& & {255}  \ar@{->}[ur] \ar@{<-}[r]  \ar@{}^{W_{12}}[d] & {256} \ar@{->}[ur]  \ar@{}^{S_{11}}[d]  & {\scriptstyle{257}}  \ar@/_/[l]  \ar@{->}[ur]  \ar@{}^{L_{10}}[d] &  {\scriptstyle{266}} \ar@/^/[ll]  \ar@{}^{K'_{10}}[d] &  \\
& & & {334}   \ar@{<--}[r]  \ar@{}^{E_{14}}[d]  & {335} \ar@{<-}[r]  \ar@{}^{Z_{13}}[d]   & {336} \ar@{<-}[r]  \ar@{}^{Q_{12}}[d] &  {\scriptstyle{337}}  \ar@{}^{J'_{11}}[d]\\
& & {344}  \ar@{->}[ur] \ar@{<-}[r]  \ar@{}^{W_{13}}[d]   & {345} \ar@{->}[ur]   \ar@{}^{S_{12}}[d]  & {\scriptstyle{346}}  \ar@/_/[l] \ar@{->}[ur]  \ar@{}^{L_{11}}[d] & {\scriptstyle{355}} \ar@/^/[ll]  \ar@{}^{K'_{11}}[d] &  \\
& {444} \ar@{->}[ur] \ar@{<-}[r]  \ar@{}^{U_{12}}[d]  & {\scriptstyle{445}} \ar@{->}[ur]  \ar@{}^{M_{11}}[d]  &  {2223}  \ar@{<--}[r] \ar@{}^{J_{3,0}}[d]  & {2224}  \ar@{<-}[r] \ar@{}^{Z_{1,0}}[d]  & {2225}  \ar@{<-}[r] \ar@{}^{Q_{2,0}}[d]  & {\scriptstyle{2226}} \ar@{}^{J'_{2,0}}[d] \\
& &  {2233}  \ar@{->}[ur] \ar@{<-}[r]  \ar@{}^{W_{1,0}}[d]  & {2234} \ar@{->}[ur]  \ar@{}^{S_{1,0}}[d] & {\scriptstyle{2235}}  \ar@/_/[l] \ar@{->}[ur]  \ar@{}^{L_{1,0}}[d]  & {\scriptstyle{2344}} \ar@/^/[ll]  \ar@{}^{K'_{1,0}}[d] &  \\
& {2333} \ar@{->}[ur]  \ar@{<-}[r] \ar@{}^{U_{1,0}}[d]  &  {\scriptstyle{2334}}  \ar@{->}[ur]  \ar@{}^{M_{1,0}}[d] & & &  & \\
{\scriptstyle{3333}} \ar@{->}[ur]   \ar@{}^{I_{1,0}}[d] &  &   & &  & &  \\
&&&  &  & & 
 }
$$
\caption{The pyramid of reductions} \label{FigPyramidR}
\end{figure}
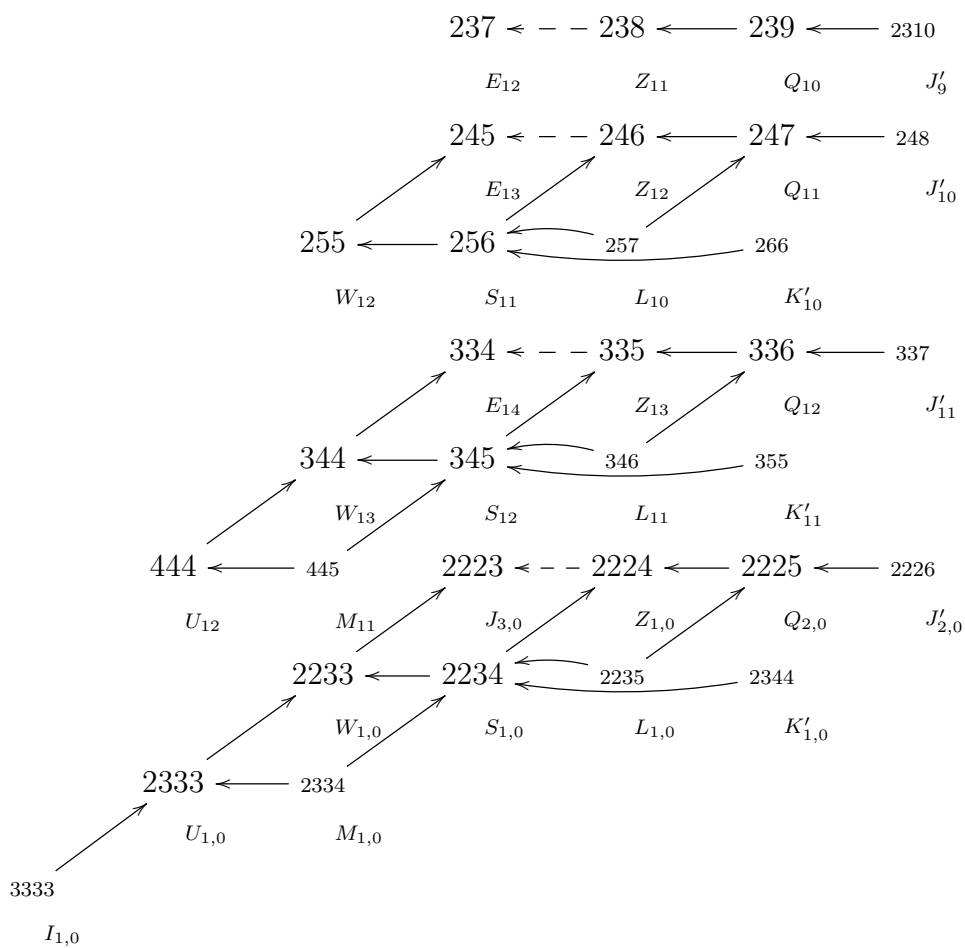

On the other hand, each of these singularities has another triple or quadruple of numbers, the {\em Gabrielov numbers}, see \cite{ET} for the definition in the case of the hypersurface singularities and \cite{ET2} in the case of the complete intersection singularities. They also have types like the Dolgachev numbers, but for them the columns corresponding to $\ell$ and $D$ are interchanged, see Table~\ref{TabKodairaG}. Note that the grade $D$ corresponds to the type of Gabrielov numbers, see  \cite[Theorem~3.13]{Laufer77}. More precisely, according to \cite[Theorem~3.13]{Laufer77}, for  $1 \leq D \leq 3$, the singularity is a hypersurface singularity and, for $D=4$, it is a complete intersection singularity. For $D=1$, the singularity is given by an equation of the form $z^2+y^3+g(x,y,z)=0$, where the polynomial $g(x,y,z)$ has only terms of degree $\geq 4$, see also \cite[Table~1]{Laufer77}. For $D=2$, the singularity is given by an equation of the form $z^2+h(x,y)=0$, where the polynomial $h(x,y)$ has only terms of degree $\geq 4$, see also \cite[Table~2]{Laufer77}. Finally, for $D=3$, the equation is $f(x,y,z)=0$, where the polynomial $f(x,y,z)$ has only terms of degree $\geq 3$. 
\begin{table}[h]
\begin{center}
\begin{tabular}{|c|c|c|}
\hline
Type of Gabrielov numbers  & $D=-Z^2$ & $\ell$\\
\hline
$2,3,c$; $c \geq 7$  & 1  & $c-6$ \\
$2,b,c$; $b,c \geq 4$  & 2 &  $b+c-8$\\
$a,b,c$; $a,b,c \geq 3$  & 3 & $a+b+c-9$ \\
$a,b,c,d$; $a,b,c,d \geq 2$  & 4 & $a+b+c+d-8$\\
\hline
\end{tabular}
\end{center}
\caption{Gabrielov numbers, grades, and numbers $\ell$} \label{TabKodairaG}
\end{table}

The hypersurface singularities with $r=3$ are the 14 exceptional unimodal singularities. There is Arnold's strange duality \cite{AGV85} between these singularities which interchanges Dolgachev and Gabrielov numbers. 

There is another partial order between singularities, namely the adjacency. We consider a special type of adjacency, namely the {\em s-adjacency} of singularities as defined by H.~Laufer in \cite[Definition~4.12]{Laufer79}.  The s-adjacency corresponds to a deformation with constant grade. By \cite[Theorem~4.13]{Laufer79}, the s-adjacencies between the 14 exceptional unimodal singularities are given by Fig.~\ref{FigPyramidAd}, where the numbers correspond to the Gabrielov numbers. This is the pyramid of adjacencies given in \cite[p.~255]{AGV85}. 
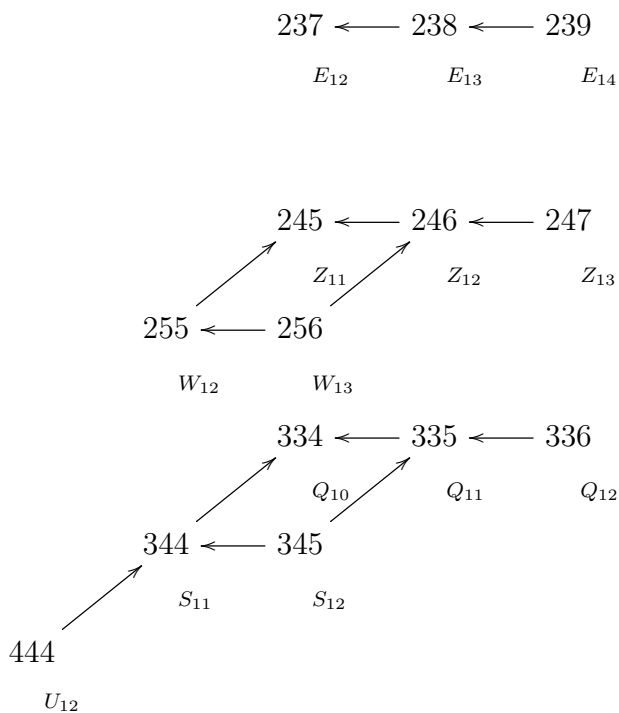
\begin{figure}[h]
$$
\xymatrix{  
 & & {237}  \ar@{<-}[r] \ar@{}^{E_{12}}[d]  & {238} \ar@{<-}[r] \ar@{}^{E_{13}}[d]  & {239}  \ar@{}^{E_{14}}[d]    \\
 & & & & \\
 & & {245}   \ar@{<-}[r]  \ar@{}^{Z_{11}}[d]  & {246} \ar@{<-}[r]  \ar@{}^{Z_{12}}[d] & {247}  \ar@{}^{Z_{13}}[d]     \\
 & {255}  \ar@{->}[ur] \ar@{<-}[r]  \ar@{}^{W_{12}}[d] & {256}   \ar@{->}[ur]  \ar@{}^{W_{13}}[d]  &  &     \\
 & & {334}   \ar@{<-}[r]  \ar@{}^{Q_{10}}[d]  & {335} \ar@{<-}[r]  \ar@{}^{Q_{11}}[d]   & {336}   \ar@{}^{Q_{12}}[d]  \\
 & {344}  \ar@{->}[ur] \ar@{<-}[r]  \ar@{}^{S_{11}}[d]   & {345} \ar@{->}[ur]   \ar@{}^{S_{12}}[d]  &  &   \\
 {444} \ar@{->}[ur]   \ar@{}^{U_{12}}[d]  &   &    &   &    \\
&&  &  &  
 }
$$
\caption{The pyramid of s-adjacencies between the 14 exceptional unimodal singularities} \label{FigPyramidAd}
\end{figure}

\begin{remark}
Note that the s-adjacency is a special case of adjacency.  For example, the general adjacency between the 14 exceptional unimodal singularities corresponds to the natural partial ordering between the Gabrielov numbers, see \cite{BMM}.
\end{remark}

A comparison of Fig.~\ref{FigPyramidR} restricted to the 14 exceptional unimodal singularities, Fig.~\ref{FigPyramidAd}, and Arnold's strange duality yields the following corollary of Theorem~\ref{thm:main}.

\begin{corollary}
Denote Arnold's strange duality by $X \leftrightarrow X^\ast$.  If a singularity $Y$ is a reduction of $X$, then the dual singularity $X^\ast$ is s-adjacent to the dual singularity $Y^\ast$.
\end{corollary}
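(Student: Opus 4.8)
The plan is to deduce the statement from a direct comparison of the two pyramids in Figures~\ref{FigPyramidR} and~\ref{FigPyramidAd}, using the extension of Arnold's strange duality as a dictionary between them. First I would record precisely what has been proved: by Theorem~\ref{thm:main}, together with the special reduction $R$ of Section~\ref{sect:mf2}(B) discussed in Remark~\ref{rem:R}, the reductions $Y\leftarrow X$ among the singularities in question are exactly the arrows of Figure~\ref{FigPyramidR}, each solid arrow increasing one Dolgachev number $\alpha_i$ by one and each dashed arrow realising an $R$-reduction; the two ``extra'' solutions in the proof of Theorem~\ref{thm:main}, with $\eps_W=-2$ and $\eps_W=-9$, fall outside the list and are irrelevant. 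Dually, by Laufer \cite[Theorem~4.13]{Laufer79}, the s-adjacencies among exactly these singularities are the arrows of Figure~\ref{FigPyramidAd}, whose node labels are now the Gabrielov numbers.

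Next I would bring in the (extended) strange duality $X\leftrightarrow X^\ast$: by \cite{AGV85} for the $14$ exceptional unimodal hypersurface singularities and by \cite{EW} (see also \cite{ET2} and \cite[Table~3.6.2]{ESLN}) for the whole list, it interchanges Dolgachev and Gabrielov numbers, and on the series it pairs the $k=0$ element with the virtual $k=-1$ element, which has the same Dolgachev numbers but the Gabrielov numbers of the series at $k=-1$. This is exactly why Figure~\ref{FigVirtual} is needed as an intermediate step: after replacing the bottom level of Figure~\ref{FigPyramidR} by its virtual counterpart (Figure~\ref{FigVirtual}) and dualising, the resulting pyramid is Figure~\ref{FigPyramidAd}. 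Concretely, the singularity occupying a given position of Figure~\ref{FigPyramidR}, labelled by the tuple $A$ read as its Dolgachev numbers, is the strange dual of the singularity occupying the same position of Figure~\ref{FigPyramidAd}, labelled by the same tuple $A$ read as its Gabrielov numbers; thus $\ast$ identifies the two pyramids as labelled directed graphs.

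Given this identification the corollary is immediate. If $Y$ is a reduction of $X$, then $X$ and $Y$ sit at two adjacent positions of the pyramid joined by an arrow of Figure~\ref{FigPyramidR} with $X$ at the source (the larger Dolgachev tuple); applying $\ast$ carries this to the arrow of Figure~\ref{FigPyramidAd} joining $X^\ast$ and $Y^\ast$, again with $X^\ast$ at the source, and by the caption of Figure~\ref{FigPyramidAd} this arrow is an s-adjacency, i.e.\ $X^\ast$ is s-adjacent to $Y^\ast$. As a conceptual check one may verify the bookkeeping with Tables~\ref{TabKodaira} and~\ref{TabKodairaG}: a reduction preserves the Kodaira type and the number $\ell$ by Wall \cite[Corollary~7.9.2]{WPLMS}, while strange duality interchanges $\ell$ with the grade $D=-Z^2$; hence the dual relation has constant grade, which is precisely an s-adjacency in the sense of Laufer \cite[Definition~4.12]{Laufer79}.

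The one genuinely delicate point, and the place where the work lies, is matching the lower part of the pyramid: one must check that the extended strange duality of \cite{EW} lines up the virtual $k=-1$ series elements correctly, so that Figures~\ref{FigVirtual} and~\ref{FigPyramidAd} really do coincide as labelled graphs, and that the dashed $R$-reductions correspond to honest s-adjacency arrows on the dual side. Beyond that it is a finite verification against Tables~\ref{TabDefInv} and~\ref{TabKodaira}.
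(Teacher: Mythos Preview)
Your proposal is correct and follows exactly the paper's approach: the paper deduces the corollary simply by comparing the two pyramids (Figures~\ref{FigPyramidR} and~\ref{FigPyramidAd}) via the extended strange duality, with Figure~\ref{FigVirtual} supplying the virtual $k=-1$ replacements needed at the bottom level. Your write-up is in fact a more detailed elaboration of this comparison, including the useful bookkeeping check via Tables~\ref{TabKodaira} and~\ref{TabKodairaG} that reduction preserves $\ell$ while duality swaps $\ell$ with $D$, hence the dual relation has constant grade.
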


\section*{Conflict of interest statement}
On behalf of all authors, the corresponding author states that there is no conflict of interest. 



\begin{thebibliography}{AGV}

\linespread{0.75}
\setlength{\parskip}{0.0ex}
\small{

\bibitem[AT]{AT} D.~Aramaki, A.~Takahashi: Maximally-graded matrix factorizations for an invertible polynomial of chain type. Adv.\ Math. \textbf{373} (2020) DOI: 10.1016/j.aim.2020.107320.

\bibitem[AGV]{AGV85} V.~I.~Arnold,   S.~M.~Gusein-Zade, A.~N.~Varchenko:
\emph{Singularities of Differentiable Maps}, Volume I, Birkh\"auser,
Boston Basel Berlin 1985.

\bibitem[A]{Artin} M.~Artin: On isolated rational singularities of surfaces. Amer. J. Math. \textbf{88}, 129--136 (1966).

\bibitem[B]{BMM} E.~Brieskorn:
Die Hierarchie der 1-modularen Singularit\"aten.
Manuscripta Math. \textbf{27}, no. 2, 183--219 (1979). 

\bibitem[BGS]{BGS} R.-O.~Buchweitz, G.-M.~Greuel, F.-O.~Schreyer: Cohen-Macaulay modules on hypersurface singularities. II. Invent. Math. \textbf{88}, no. 1, 165--182 (1987).

\bibitem[E1]{EPLMS} W.~Ebeling: The Milnor lattices of the elliptic hypersurface singularities. Proc. London Math. Soc. (3) \textbf{53}, no. 1, 85--111 (1986).

\bibitem[E2]{ESLN} W.~Ebeling: \emph{The Monodromy Groups of Isolated
Singularities of Complete Intersections}. Lect. Notes in Math., Vol. {\bf 1293}, 
Springer-Verlag, Berlin etc., 1987.   

\bibitem[E3]{ERIMS} W.~Ebeling: 
The Poincar\'e series of some special quasihomogeneous surface singularities. 
Publ. Res. Inst. Math. Sci. \textbf{39}, no. 2, 393--413 (2003).

\bibitem[ET1]{ET} W.~Ebeling, A.~Takahashi: Strange duality of weighted homogeneous polynomials. Compositio Math. {\bf 147}, 1413--1433 (2011).  

\bibitem[ET2]{ET2} W.~Ebeling, A.~Takahashi: Strange duality between hypersurface and complete intersection singularities. Arnold Math. J. {\bf 2}, 277--298 (2016).

\bibitem[EW]{EW} W.~Ebeling, C.~T.~C.~Wall: Kodaira singularities and an
extension of Arnold's strange duality. Compositio Math. {\bf 56}, 3--77 (1985).

\bibitem[GL]{GL} W.~Geigle, H.~Lenzing:
       A class of weighted projective curves arising in
       representation theory of finite-dimensional algebras. In:
       Singularities, representation of algebras, and vector bundles 
       (Lambrecht, 1985), pp.~265--297, 
        Lecture Notes in Math., {\bf 1273}, Springer, Berlin, 1987. 
        
\bibitem[KST]{KST2} H.~Kajiura, K.~Saito, A.~Takahashi:
Triangulated categories of matrix factorizations for regular systems of weights with $\varepsilon=-1$. 
Adv.\ Math. \textbf{220}, 1602--1654 (2009).

\bibitem[L1]{Laufer77}  H.~B.~Laufer: On minimally elliptic singularities. Amer. J. Math. \textbf{99}, no. 6, 1257--1295 (1977).

\bibitem[L2]{Laufer79} H.~B.~Laufer:
Ambient deformations for exceptional sets in two-manifolds. 
Invent. Math. \textbf{55}, no. 1, 1--36 (1979).

\bibitem[S]{Saito} K.~Saito: Regular system of weights and associated singularities, 
Complex Analytic Singularities, Adv. Stud. Pure Math. {\bf 8}, 479--526 (1987).

\bibitem[W]{WPLMS} C.~T.~C.~Wall:  Notes on the classification of singularities. Proc. London Math. Soc. (3) {\bf 48}, no. 3, 461--513 (1984) .

}
\end{thebibliography}
\end{document}